\newcommand{\filebegin}{\begin{document}}
\newcommand{\fileend}{\end{document}}
\def\thefootnote{}
\newcommand{\lo}{\longrightarrow}
\newcommand{\NMM}{\hspace*{2mm}}
\renewcommand{\baselinestretch}{1.1}
\renewcommand{\baselinestretch}{1.1}
\def\n{\noindent}%
\numberwithin{equation}{section}
\def\mapdown#1{\Big\downarrow\rlap
{$\vcenter{\hbox{$\scriptstyle#1$}}$}}
\newcommand{\cmark}{\ding{51}}
\newcommand{\xmark}{\ding{55}}
\newcommand{\lr}{\longrightarrow}
\newcommand{\dis}{\di^{\circledast}}
\newcommand{\fo}{\mathcal{F}}
\newcommand{\frs}[2]{\hbox{$\ds \frac{#1}{#2}$}}
\newcommand{\op}{\overline{\parzial}}
\newcommand{\pa}{\partial}
\newcommand{\paa}[2]{\partial_{#1#2}}
\newcommand{\paaa}[2]{\partial_{#1#2^2}}
\newcommand{\paaaa}[3]{\partial_{#1#2#3}}
\newcommand{\ot}{\otimes}
\newcommand{\om}{\omega}
\newcommand{\hs}[1]{\hspace{#1cm}}
\newcommand{\ric}[4]{#1\omega_{#2}\circ\omega_{#3}\otimes\omega_{#4}}
\newcommand{\rics}[4]{#1\omega_{#2}\ot\omega_{#3}\otimes\omega_{#4}}
\theoremstyle{plain}
\newtheorem{theorem}{Theorem}[section]
\newtheorem*{theorem*}{Theorem}
\newtheorem{definition}[theorem]{Definition}
\newtheorem{lemma}[theorem]{Lemma}
\newtheorem{remark}{Remark}
\newtheorem{no}[theorem]{Notation}
\newtheorem{prop}[theorem]{Proposition}
\newtheorem{cor}[theorem]{Corollary}
\newtheorem{rem}[theorem]{Remark}
\newtheorem{ex}[theorem]{Example}
\newtheorem*{mt*}{Main Theorem}
\DeclareMathOperator{\di}{d}
\newcommand\C{{\mathbb C}}
\newcommand\f{\mathcal{F}}
\newcommand\g{{\mathfrak{g}}}
\newcommand\h{{\mathfrak{h}}}
\newcommand\ka{{\mathfrak{k}}}
\renewcommand\r{{\mathfrak{r}}}
\newcommand\su{{\mathfrak{su}}}
\renewcommand\k{{\kappa}}
\renewcommand\l{{\lambda}}
\newcommand\m{{\mathfrak{m}}}
\renewcommand\O{{\omega}}
\renewcommand\t{{\theta}}
\newcommand\ebar{{\bar{\varepsilon}}}
\newcommand\R{{\mathbb R}}
\newcommand\Z{{\mathbb Z}}
\newcommand\s{{\mathbb S}}
\newcommand{\de}[2]{\frac{\partial #1}{\partial #2}}
\newcommand\w{\wedge}
\newcommand{\ov}[1]{\overline{ #1}}
\newcommand{\Tk}{\mathcal{T}_{\omega}}
\newcommand{\ovp}{\overline{\partial}}
\newcommand\ad{{\rm ad}}
\newcommand{\D}{\mathcal D}
\newcommand{\A}{\mathcal A}
\newcommand{\B}{\mathcal B}
\newcommand{\G}[3]{\Gamma_{#1#2}^{#3}}
\renewcommand{\theequation}{\thesection.\arabic{equation}}
\begin{document}
\vspace*{2cm}
\begin{center}
{\bf\large Harmonicity of left-invariant vector fields on Einstein Lorentzian Lie groups}
 \\[0.5cm]
{Yadollah Aryanejad\\ 
Department of Mathematics Payame noor University\\P.O. Box 19395-3697 Tehran Iran\\
y.aryanejad@pnu.ac.ir} \\[2mm]

\end{center}
\vspace*{0.5cm}
\begin{quotation}
\noindent
{\footnotesize
{\sc Abstract.}
We consider four-dimensional Lie groups equipped with
 left-invariant Einstein Lorentzian metrics. The harmonicity properties
of left-invariant vector fields on these spaces are determined. In some cases, all these vector fields are critical points for the energy functional
restricted to vector fields. Left-invariant vector fields defining harmonic maps are also classified, and the energy of these vector
fields is explicitly calculated.}
\end{quotation}
\ \\
{\bf Keywords:} Harmonic vector fields, Harmonic maps, Einstein metrics, Lie group, Pseudo-Riemannian homogeneous spaces\\
\n \textbf{2000 Mathematics subject classification: } 53C50, 53C15, 53C25.

\markboth
{Y.Aryanejad}
{Harmonicity of left-invariant vector fields on Einstein Lorentzian Lie groups}

\section{Introduction}
\label{}
Symmetries of the mathematical models play an important role in applied sciences (see \cite{iran1}, \cite{iran2} and \cite{iran3}).
 In \cite{b1} it is proved that a (simply-connected) four-dimensional homogeneous Riemannian manifold is either symmetric or isometric to a Lie group equipped with a left-invariant Riemannian metric. Indeed, the class of n-dimensional simply connected Lorentzian Lie groups (respectively ,Lorentzian Lie algebras) coincides with the class of the Riemannian ones. Using this fact, four-dimensional Einstein Lorentzian Lie groups have been classified \cite{c3}. 
On the other hand, investigating critical points of the energy associated to vector fields is an interesting problem from
different points of view.
In Riemannian settings, it has been proved that critical points of the energy functional $E:\mathfrak{X}(M)\rightarrow R$,
 restricted to maps defined by vector fields, are parallel vector fields (see \cite{n}, \cite{i} and \cite{g3}). Moreover, Gil-Medrano \cite{g3} studied when $V$ is a harmonic map. So, it is natural to 
determine the harmonicity properties
of vector fields on four-dimensional Lorentzian Einstein Lie groups.\\
A Riemannian manifold admitting a parallel vector field is locally reducible, and the same is true for a
pseudo-Riemannian manifold admitting an either space-like or time-like parallel vector field. This leads us to consider different situations, where some interesting types of non-parallel vector fields can be characterized in terms of harmonicity properties (see \cite{d1}, \cite{i} and \cite{n}).\\
Let $(M,g)$ be a compact pseudo-Riemannian manifold and $g_s$ the Sasaki metric
on the tangent bundle $TM$. The energy of a smooth vector field $V:(M,g)\longrightarrow
(TM,g^s)$ on $M$ is;
\begin{equation}\label{enr}
E(V)=\dfrac{n}{2}vol (M,g)+\dfrac{1}{2}\int_M ||\nabla V||^2dv
\end{equation}
(assuming M compact; in the non-compact case, one works over relatively compact domains see \cite{c1}). If $V:(M,g)\longrightarrow (TM,g^s)$ is a critical point for the energy functional, then $V$ is said to define a harmonic map. The Euler-Lagrange equations characterize vector fields $V$ defining harmonic maps as the ones whose tension field $\t(V)=tr(\nabla^2V)$ vanishes.
 Consequently,
$V$ defines a harmonic map from $(M,g)$ to $(TM,g^s)$ if and only if
\begin{equation}\label{hor}
 tr[R(\nabla_. V,V)_.]=0, \quad  \nabla^*\nabla V=0,
\end{equation}
where with respect to a pseudo-orthonormal local frame $\lbrace e_1,...,e_n\rbrace$ on $(M,g)$, with $\varepsilon_i=g(e_i,e_i)=\pm1$ for all indices i,
one has
\begin{center}
$ \nabla^*\nabla V=\sum_i \varepsilon_i( \nabla_{e_i}\nabla_{e_i} V-\nabla_{\nabla_{e_i}e_i}V)$.   
\end{center}
A smooth vector field V is said to be a harmonic
section if it is a critical point of $E^v(V)=(1/2)\int_M||\nabla V||^2dv$, where $E^v$ is the vertical energy. The corresponding Euler-Lagrange equations are given by
\begin{equation}
 \nabla^*\nabla V=0.
\end{equation}
Let $\mathfrak{X}^{\rho}(M) =\lbrace V\in \mathfrak{X}(M): ||V||^2=\rho^2 \rbrace$ and $\rho\neq 0$. Then, one can consider vector fields $ V\in \mathfrak{X}^{\rho}(M)$
which are critical points for the energy functional $E
|_{\mathfrak{X}^{\rho}(M)}$, restricted to vector fields of the same constant length. 
The
Euler-Lagrange equations of this variational condition are given by
\begin{equation}\label{hor1}
\nabla^*\nabla V\quad is\quad collinear\quad to\quad V.   
\end{equation}
In the non-compact case, the condition \eqref{hor1} is taken as a definition
of critical points for the energy functional under the assumption $\rho\neq 0$, that is, if $V$ is not light-like. 
If $\rho=0$, then \eqref{hor1} is still a sufficient condition so that $V$ is a critical point for the energy functional $E|_{\mathfrak{X}^{0}(M)}$, restricted to light-like vector fields (\cite{c1}, Theorem 26).\\
Following \cite{c3}, four-dimensional Einstein Lorentzian Lie groups are classified into four types, denoted by $\rm(a1)$, $\rm(a2)$, $\rm(c1)$ and $\rm(c2)$. In the present paper using a case-by-case argument we shall completely investigat the harmonicity of vector fields on these spaces. \\
The paper is organized as follows.
In Section 2, we recall basic properties of Einstein Lorentzian Lie algebras, as described in \cite{c3}. Harmonicity properties of vector fields on four-dimensional Einstein Lorentzian Lie group of types $\rm(a1)$, $\rm(a2)$, $\rm(c1)$ and $\rm(c2)$ will be investigated
in Sections 3-6, respectively. Finally, the energy and the minimality of all these vector fields are explicitly calculated in Section 7.

 \section{Einstein Lorentzian Lie groups}
Let $(G,g)$ be a four-dimensional Lorentzian Lie group. Following \cite{c3}, the Lie algebra $\g$ of $G$ is a semi-direct product $\r\ltimes \g _3$, where $\r =span\lbrace e_4\rbrace$ acts on $\g _3 =span\lbrace e_1,e_2,e_3 \rbrace $, and the Lorentzian inner product on $\g$ is described by
$$ 
 (a)\quad   \left( \begin{array}{cccc}
   1 & 0 & 0 & 0  \\
    0 & 1 & 0 & 0  \\
   0 & 0 & -1 & 0  \\
   0 & 0 & 0 & 1
 \end{array}  \right),
\quad\quad (c)\quad  \left( \begin{array}{cccc}
   1 & 0 & 0 & 0  \\
    0 & 1 & 0 & 0  \\
   0 & 0 & 0 & 1  \\
   0 & 0 & 1 & 0
 \end{array}  \right).\\
 $$
In 2013 Calvaruso and Zaeim \cite{c3} obtained the following result:
\begin{theorem}\label{cal33}
Let $G$ be a four-dimensional simply connected Lie group. If $g$ is a left-invariant Lorentzian Einstein metric on $G$, then the Lie algebra $\g$ of $G$ is isometric to $\g=\r\ltimes \g _3$, where $\g _3 =span\lbrace e_1,e_2,e_3 \rbrace $ and $\r =span\lbrace e_4\rbrace$, and one of the following cases occurs.\\
 ${\quad \rm (a)} $ $\lbrace e_i\rbrace _{i=1}^4$ is a pseudo-orthonormal basis, with $e_3$ time-like. In this case, $G$ is isometric to one of the following semi-direct products $\R \ltimes G_3$:\\
 ${\quad \rm (a1)} $ $\R\ltimes H$, where $H$ is the Heisenberg group and $\g$ is described by one of the following sets of conditions:
 \begin{itemize}
\item[(1)] $
  [e_1,e_2]=\epsilon A e_1,  [e_1,e_3]=A e_1, [e_1,e_4]=\delta A e_1,  [e_3,e_4]=-2A \delta (\epsilon e_2-e_3),$
\item[(2)] $[e_1,e_2]=\frac{\epsilon \sqrt{A^2-B^2}}{2} e_1, [e_1,e_3]=-\frac{\epsilon\delta \sqrt{A^2-B^2}}{2} e_1,
 [e_1,e_4]=\frac{\delta A+B}{2} e_1,  [e_2,e_4]=B (e_2+\delta e_3),  [e_3,e_4]= A(e_2+\delta e_3),$
\item[(3)] $[e_1,e_2]=\frac{\epsilon A\sqrt{A^2-B^2}}{B} e_1, [e_1,e_3]=\epsilon \sqrt{A^2-B^2} e_1, [e_2,e_4]=B e_2-A e_3,  [e_3,e_4]=A e_2-\frac{A^2}{B}e_3,
$
\item[(4)] $[e_1,e_2]=\epsilon \sqrt{A^2-B^2} e_1+Be_2,  [e_3,e_4]=A e_3,
$
\end{itemize}
 ${\quad \rm (a2)} $ $\R\ltimes \R^3$, where $\g$ is described by one of the following sets of conditions:
  \begin{itemize}
\item[(5)] $
 [e_1,e_4]=-(A+B) e_1, [e_2,e_4]=Be_2-\epsilon \sqrt{A^2+AB+B^2} e_3,  [e_3,e_4]=\epsilon \sqrt{A^2+AB+B^2} e_2+A e_3,
$
\item[(6)] $ [e_1,e_4]=-2A e_1, [e_2,e_4]=-5Ae_2+6\epsilon A e_3,  [e_3,e_4]=A e_3,
$
\item[(7)] $[e_1,e_4]=A e_1, [e_2,e_4]=A e_2+B e_3,  [e_3,e_4]=Be_2+A e_3,
$
\item[(8)] $[e_1,e_4]=\epsilon \frac{A+B}{3} e_1, [e_2,e_4]=\epsilon\frac{5B-A}{6} e_2+B e_3, [e_3,e_4]=Ae_2+\epsilon\frac{5A-B}{6} e_3,
$
\item[(9)] $ [e_1,e_4]= \frac{5A}{2} e_1+3\epsilon Ae_3, [e_2,e_4]=A e_2,  [e_3,e_4]=-\frac{A}{2} e_3,
$
\item[(10)] $ [e_1,e_4]=A e_1+\epsilon\sqrt{B^2-A^2-C^2-AC}e_2 , [e_2,e_4]=\epsilon\sqrt{B^2-A^2-C^2-AC}e_1-(A+C) e_2-Be_3,  [e_3,e_4]=B e_2+C e_3,
$
\item[(11)] $ [e_1,e_4]=- \frac{2\epsilon \sqrt{2}A}{3} e_1+\delta Ae_3, [e_2,e_4]=\frac{\epsilon \sqrt{2}A}{3} e_2,  [e_3,e_4]=A e_2-\frac{\epsilon \sqrt{2}A}{6} e_3,
$
\end{itemize}
 ${\quad \rm (c)} $ $\lbrace e_i\rbrace _{i=1}^4$  is a basis, with the inner product $g$ on $\g$ completely determined by $g(e_1,e_1)=g(e_2,e_2)=g(e_3,e_4)=g(e_4,e_3)=1$ and $g(e_i,e_j)=0$ otherwise. In this case, $G$ is isometric to one of the following semi-direct products $\R\ltimes G_3$:\\
 ${\quad \rm (c1)} $ $\R\ltimes H$,where $\g$ is described by one of the following sets of conditions
 \begin{itemize}
\item[(12)] $[e_1,e_2]=\epsilon (A+B) e_3, [e_1,e_4]=Ce_1+Be_2+De_3, [e_2,e_4]=B e_1+Ee_3, [e_3,e_4]=Ce_3,$
 \item[(13)] $[e_1,e_2]=B e_3, [e_1,e_4]=\frac{(C+D)^2-B^2}{4A}e_1+De_2+Fe_3, [e_2,e_4]=C e_1+Ae_2+Ee_3,  [e_3,e_4]=\frac{(C+D)^2-B^2+4A^2}{4A}e_3,$
 \item[(14)] $  [e_1,e_2]=\epsilon\sqrt{((A+D)^2+4B^2)} e_3, [e_1,e_4]=-Be_1+De_2+Ee_3, [e_2,e_4]=A e_1+Be_2+Ce_3,$
\end{itemize}
 ${\quad \rm (c2)} $ $\R\ltimes \R^3$, where $\g$ is described by one of the following sets of conditions:
  \begin{itemize}
\item[(15)] $ [e_1,e_2]=A e_2+Be_3, [e_2,e_4]=-A e_1+Ce_3,$
 \item[(16)]  $ [e_1,e_4]=A e_1+Be_2+Ce_3, [e_2,e_4]=D e_1+Ee_2+Fe_3, [e_3,e_4]=\frac{(B+D)^2+2(A^2+E^2)}{2(E+A)}e_3$
\end{itemize}
In all the cases listed above, $\epsilon=\pm1$, $\delta=\pm 1$ and $A, B, C, D$ are real constants. 
\end{theorem}
\section{Harmonicity of vector fields: type $\rm(a1)$}
All four-dimensional simply connected Einstein Lorentzian Lie groups of  type $\rm(a1)$ are symmetric \cite{c3} and the study of harmonic invariant vector fields on these spaces would be natural and interesting.
The main purpose of this section is to investigat the harmonicity properties
of left-invariant vector fields on four-dimensional Lorentzian Lie group of type $\rm(a1)$.
The following notation is necessary.
\begin{no}\label{no}
Let $\tilde{\mathfrak{X}}^{\rho}(M)$ denote the set of all vector fields $ V\in \mathfrak{X}^{\rho}(M)$,
which are critical points for the energy functional $E
|_{\mathfrak{X}^{\rho}(M)}$, restricted to vector fields of the same constant length. Remember that ${\rho}$  is not necessarily the same for different cases. 
\end{no}
Let $(G,g)$ be a four-dimensional Lorentzian Lie group of type $\rm(a1)$ and $\lbrace e_i\rbrace _{i=1}^4$ a pseudo-orthonormal basis, with $e_3$ time-like. Under these assumptions,
we prove the following result.
\begin{theorem}\label{hor455}
Let $\g$ be the Lie algebra of $G$ and $V=ae_1+be_2+ce_3+de_4\in \g$ a left-invariant vector field on $G$ for some real constants $a,b,c,d$. For the
different cases $(1)-(4)$ of type $\rm(a1)$, we have:
 \begin{itemize}
 \item[(1)]: $V \in \tilde{\mathfrak{X}}^{\rho}(G)$ if and only if $V=c(e_2-e_3-e_4)$, that is, $b=-c=-d$. In this case $\epsilon=1, \quad \nabla^*\nabla V=3A^2V.$
 \item[(2)]: $V \in \tilde{\mathfrak{X}}^{\rho}(G)$ if and only if $V=c(e_2+e_3-e_4)$, that is, $b=c=-d$. In this case $\epsilon=-1$, $\delta=1,\quad \nabla^*\nabla V=-\frac{3}{4}(A+B)^2V$.
 \item[(3)]: $V \in \tilde{\mathfrak{X}}^{\rho}(G)$, in this case,  $\nabla^*\nabla V=-\frac{(A^2-B^2)^2}{B^2}V$.  
 \item[(4)]: $V \in \tilde{\mathfrak{X}}^{\rho}(G)$  if and only if $a=b=0$, in this case $\nabla^*\nabla V=-A^2V$ or $c=d=0$, in this case $\nabla^*\nabla V=(B^2-A^2)V$.
\end{itemize} 
\end{theorem}
\begin{proof}
The above statement is obtained from a case-by-case argument.
 As an example, we
report the details for case $(3)$ here.
Let $V\in \g$ be a critical point for the energy functional. 
The components of the Levi-Civita connection are the following: 
\begin{eqnarray}\label{con1}
\begin{array}{cr}
\nabla_{e_1}e_1=-\frac{\epsilon A\sqrt{A^2-B^2}}{B}e_2+\epsilon\sqrt{A^2-B^2}e_3,\quad 
\nabla_{e_1}e_2=\frac{\epsilon A\sqrt{A^2-B^2}}{B}e_1\\
 \nabla_{e_1}e_3=\epsilon\sqrt{A^2-B^2}e_1\quad 
\nabla_{e_2}e_2=-Be_4,\quad 
\nabla_{e_2}e_3=-Ae_4,\\
\nabla_{e_2}e_4=Be_2-Ae_3,\hspace*{23mm} 
\nabla_{e_3}e_2=-Ae_4,\\
\nabla_{e_3}e_3=-\frac{A^2}{B}e_4,\hspace*{23mm}\nabla_{e_3}e_4=Ae_2-\frac{A^2}{B}e_3,
\end{array}
\end{eqnarray}
while $\nabla_{e_i}e_j=0$ in the remaining cases.\\
From \eqref{con1} we obtain
\begin{eqnarray}\label{con2}
\begin{array}{cr}
\nabla_{e_1}V=\epsilon\sqrt{A^2-B^2}(\frac{cB+bA}{b}e_1-\frac{aA}{B}e_2+ae_3),\\ \nabla_{e_2}V=dBe_2-dAe_3-(cA+bB)e_4,\hspace*{18mm}\nabla_{e_4}V=0\\
\nabla_{e_3}V=dAe_2-\frac{dA^2}{B}e_3-\frac{A(cA+bB)}{B}e_4.
\end{array}
\end{eqnarray}
Clearly, there are no parallel vector fields $V\neq 0$ in $\g$.
We can now calculate $\nabla_{e_i}\nabla_{e_i}V$ and $\nabla_{\nabla_{e_i}e_i}V$ for all indices i and we find
\begin{eqnarray}
\begin{array}{cr}\label{con3}
\nabla_{e_1}\nabla_{e_1}V=\frac{-(A^2-B^2)}{B^2}(a(A^2-B^2)e_1+(cB+bA)(Ae_2-Be_3)),\\ \nabla_{e_2}\nabla_{e_2}V=-(cB+bA)(Be_2-Ae_3)+d(A^2-B^2)e_4,\\ \nabla_{e_3}\nabla_{e_3}V=\frac{-A^2}{B^2}((cB+bA)(Be_2-Ae_3)+d(A^2-B^2)e_4),\quad\nabla_{e_4}\nabla_{e_4}V=0,\\
\nabla_{\nabla_{e_1}e_1}V=0,\quad\nabla_{\nabla_{e_3}e_3}V=0
\\
\nabla_{\nabla_{e_2}e_2}V=0,
  \quad\quad
\nabla_{\nabla_{e_4}e_4}V=0.\quad
\end{array}
\end{eqnarray}
Then, we get
\begin{eqnarray}
\begin{array}{cc}\nonumber
 \nabla^*\nabla V=& \hspace{-2cm}\sum_i \varepsilon_i( \nabla_{e_i}\nabla_{e_i} V-\nabla_{\nabla_{e_i}e_i}V)=\frac{-(A^2-B^2)}{B^2}(a(A^2-B^2)e_1+\\ &(cB+bA)(Ae_2-Be_3))-(cB+bA)(Be_2-Ae_3)+d(A^2-B^2)e_4-\\ &\hspace{-1.5cm}(\frac{-A^2}{B^2}((cB+bA)(Be_2-Ae_3)+d(A^2-B^2)e_4))=-\frac{(A^2-B^2)^2}{B^2}V.  
\end{array}
\end{eqnarray}
\end{proof}
As the definitions already show, $V$ is harmonic if $ \nabla^*\nabla V=0$ and $V$ defines a harmonic map if and only if
\begin{center}
$ tr[R(\nabla_. V,V)_.]=0, \quad  \nabla^*\nabla V=0.$
\end{center}
For case $(3)$ in Theorem \ref{hor455}, 
$ \nabla^*\nabla V=-\frac{(A^2-B^2)^2}{B^2}V=0$ if and only if $A=\pm B$, that is, $V$ is harmonic if and only if $A=\pm B$. 
Let $R$ denote the curvature tensor of $(M,g)$, taken with the sign convention $R(X,Y ) = \nabla [X,Y]-[\nabla X, \nabla Y ]$.
Then, using \eqref{con2}, we find
\begin{center}
$R(\nabla_{e_1} V,V)e_1=\frac{\epsilon(A^2-B^2)^{3/2}}{B^3} ((A^2-B^2)a^2+(bA+cB)^2)(Ae_2-Be_3),$\\ $ \frac{A^2}{B^2}R(\nabla_{e_2} V,V)e_2=R(\nabla_{e_3} V,V)e_3=\frac{A^2(A^2-B^2)}{B^3}((A^2-B^2)d^2-(cA+bB)^2)e_4 ,\quad R(\nabla_{e_4} V,V)e_4=0$   
\end{center}
and so, when $A=\pm B$ clearly,
\begin{center}
$tr[R(\nabla_. V,V)_.]=\sum_{i} \varepsilon_i R(\nabla_{e_i} V,V)e_i=0$.   
\end{center}
Hence, $tr[R(\nabla_. V,V)_.]=0$ if and only if $A=\pm B$. 
Appling this argument for other cases of type $\rm(a1)$ proves the following classification result.
\begin{theorem}\label{horm}
Let $V$ be a critical point for the energy functional, described by the conditions $(2)$ and $(3)$ in Theorem \ref{hor455}. Then, for cases $(2)$ and $(3)$, $V$ defines a harmonic map if and only if $A=-B$ and $A=\pm B$ respectively.
\end{theorem} 
\begin{table}
\caption{Equivalent properties for the
cases $(1)-(4)$ in Theorem \ref{hor455}: type $\rm(a1)$.\label{tab}}
\begin{tabular}{|p{1cm}|p{13cm}|}
\hline
$(G,g)$ &   Equivalent properties (denoted by $\equiv$) \\
\hline
$(1)$&  $V$  is geodesic; $\hspace*{2mm}\equiv\hspace*{2mm}$  $V \in \tilde{\mathfrak{X}}^{\rho}(G)$; $\hspace*{2mm}\equiv\hspace*{2mm}$  none of these vector fields is harmonic (in particular, defines a harmonic map); $\hspace*{2mm}\equiv\hspace*{2mm}$  $V=c(e_2-e_3-e_4)$,\\
\hline
$(2)$&$V$  is geodesic; $\hspace*{2mm}\equiv\hspace*{2mm}$  $V$ is harmonic if and only if $A=-B;$ $\hspace*{2mm}\equiv\hspace*{2mm}$  $V \in \tilde{\mathfrak{X}}^{\rho}(G)$; $\hspace*{2mm}\equiv\hspace*{2mm}$  $V$ defines harmonic map if and only if $A=-B;$ $\hspace*{2mm}\equiv\hspace*{2mm}$  $V$ is Killing if and only if $A=-B$ and $d=0$; $\hspace*{2mm}\equiv\hspace*{2mm}$  $V=c(e_2+e_3-e_4)$,\\
\hline
$(3)$& $V$  is geodesic if and only if $A=\pm B$ and $b=\mp c$; $\hspace*{2mm}\equiv\hspace*{2mm}$ $V$ is harmonic if and only if $A=\pm B$; $\hspace*{2mm}\equiv\hspace*{2mm}$  $V \in \tilde{\mathfrak{X}}^{\rho}(G)$; $\hspace*{2mm}\equiv\hspace*{2mm}$  $V$ defines harmonic map if and only if $A=\pm B$; $\hspace*{2mm}\equiv\hspace*{2mm}$  $V$ is Killing if and only if $A=\pm B$, $b=\mp c$ and $d=0$,\\
\hline
$(4)$&  $V$  is geodesic if and only if $a=b=c=0$; $\hspace*{2mm}\equiv\hspace*{2mm}$ $V \in \tilde{\mathfrak{X}}^{\rho}(G)$ if and only if $a=b=0$; $\hspace*{2mm}\equiv\hspace*{2mm}$  none of these vector fields is harmonic (in particular, defines a harmonic map).\\
\hline
\end{tabular}
\end{table}
A vector field $V$ is geodesic if $\nabla_VV =0$, and is Killing if $\mathcal{L}_V g=0$, where $\mathcal{L}$ denotes
the Lie derivative. Parallel vector fields are both geodesic and Killing, and vector fields with these special geometric features often have particular harmonicity properties. A straightforward calculation proves the following main classification result.
\begin{cor}\label{hor6}
If $g$ is a left-invariant Lorentzian Einstein metric on $G$, then for the
different cases $(1)-(4)$ in Theorem \ref{hor455}, the equivalent properties for $V=ae_1+be_2+ce_3+de_4\in \g$ are listed in Table 1.
\end{cor}
\begin{rem}\label{spatially}
Recall that for a Lorentzian Lie group, a left-invariant vector field $V$ is spatially
harmonic if and only if
\begin{equation}\label{hor13456}
\tilde{X}_V=-\nabla^*\nabla V-\nabla_{V}\nabla_{V}V-div V\cdot \nabla_{V}V+(\nabla V)^t \nabla_{V}V  \quad is\quad collinear\quad to\quad V.   
\end{equation}
Clearly, conditions \eqref{hor1} and \eqref{hor13456} coincide for geodesic vector fields. Hence, the results listed in Table 1 show that for cases $(1)$ and $(2)$, $V$ is spatially harmonic and for case $(3)$, $V$ is spatially harmonic if and only if
$A=\pm B$ and $b=\mp c$. For case $(4)$, $V$ is spatially harmonic if and only if
$a=b=c=0$.
\end{rem}
\section{Harmonicity of vector fields: type $\rm(a2)$}
Let $(G,g)$ be a four-dimensional Lorentzian Lie group of type $\rm(a2)$, 
$\g$ the Lie algebra of $G$ and $V=ae_1+be_2+ce_3+de_4\in \g$ a left-invariant vector field on $G$, for some real constants $a,b,c,d$. The Lie algebra $\g$ is described by one of the sets of conditions $(5)-(11)$ in Theorem \ref{cal33}.
As an example, for case $(7)$ a direct calculation
yields that we can describe the Levi-Civita connection as follows:
  \begin{eqnarray}\label{conc12}
\begin{array}{cr}
\nabla_{e_1}e_1=-Ae_4\quad \quad \nabla_{e_1}e_4=Ae_1,\quad \quad
\nabla_{e_2}e_2=-Ae_4\quad \quad \nabla_{e_2}e_4=Ae_2,\\ 
\nabla_{e_3}e_3=Ae_4\quad \quad \nabla_{e_3}e_4=Ae_3, \quad \quad
\nabla_{e_4}e_2=-Be_3\quad \quad \nabla_{e_4}e_3=-Be_2.
\end{array}
\end{eqnarray}
Using \eqref{conc12} to calculate $\nabla_{e_i}V$ for all indices i, we get
\begin{eqnarray}\label{XX}
\begin{array}{cr}
\nabla_{e_1}V=A(de_1-ae_4),\quad\nabla_{e_2}V=A(de_2-be_4),\\
\nabla_{e_3}V=A(de_3+ce_4),\quad \nabla_{e_4}V=-B(ce_2+be_3).
\end{array}
\end{eqnarray}
Clearly, there are no parallel vector fields $V\neq 0$ in $\g$. We can now use \eqref{XX} to obtain $\nabla_{e_i}\nabla_{e_i}V$ for all indices i and we find
\begin{eqnarray}
\begin{array}{cr}\label{con3a2}
\nabla_{e_1}\nabla_{e_1}V=-A^2(ae_1+de_4),\quad\nabla_{e_2}\nabla_{e_2}V=-A^2(be_2+de_4),\\ \nabla_{e_3}\nabla_{e_3}V=A^2(ce_3+de_4),\quad\nabla_{e_4}\nabla_{e_4}V=B^2(be_2+ce_3).\\
\end{array}
\end{eqnarray}
We calculate $\nabla_{\nabla_{e_i}e_i}V$ for all indices i. We obtain
\begin{eqnarray}
\begin{array}{cr}\label{con3}
\nabla_{\nabla_{e_1}e_1}V=\nabla_{\nabla_{e_2}e_2}V=-\nabla_{\nabla_{e_3}e_3}V=AB(ce_2+be_3),\quad
\nabla_{\nabla_{e_4}e_4}V=0.
\end{array}
\end{eqnarray}
Then, we get
\begin{eqnarray}
\begin{array}{cr}\label{con33a2}
\nabla^*\nabla V=\sum_i \varepsilon_i( \nabla_{e_i}\nabla_{e_i} V-\nabla_{\nabla_{e_i}e_i}V)=\\
-A^2ae_1+((B^2-A^2)b-3ABc)e_2+((B^2-A^2)c-3ABb)e_3-3A^2de_4. 
\end{array}
\end{eqnarray}
Hence, $\nabla^*\nabla V=(B^2-A^2-3AB)V$ if and only if $b=c$ and $a=d=0$. 

Using this argument for other cases leads to the following result.
\begin{theorem}\label{hor4a2}
Let $G$ be a four-dimensional simply connected Lie group of type $\rm(a2)$ and $V=ae_1+be_2+ce_3+de_4$ a left-invariant vector field on $G$, for some real constants $a,b,c,d$. For the
different cases $(5)-(11)$ in Theorem \ref{cal33}, we have:
 \begin{itemize}
 \item[(5)]: $V \in \tilde{\mathfrak{X}}^{\rho}(G)$ if and only if $V=ae_1$, that is, $b=c=d=0$. In this case $\nabla^*\nabla V=-(A+B)^2V.$
 \item[(6)]: $V \in \tilde{\mathfrak{X}}^{\rho}(G)$ if and only if  $V=b(e_2-e_3)$, that is, $c=-b$. In this case $\epsilon=1, \quad \nabla^*\nabla V=-13A^2V$.
 \item[(7)]: $V \in \tilde{\mathfrak{X}}^{\rho}(G)$ if and only if  $V=b(e_2+e_3)$, that is, $c=b$. In this case $\nabla^*\nabla V=(B^2-A^2+3AB)V$.
 \item[(8)]: $V \in \tilde{\mathfrak{X}}^{\rho}(G)$ if and only if  $V=b(e_2-e_3)$, that is, $c=-b$. In this case $\epsilon=-1, \quad \nabla^*\nabla V=\frac{13}{36}(A+B)^2V.$
 \item[(9)]: $V \in \tilde{\mathfrak{X}}^{\rho}(G)$ if and only if  $V=a(e_1+e_3)$, that is, $c=a$. In this case $\epsilon=1, \quad \nabla^*\nabla V=-\frac{13}{4}A^2V$. 
  \item[(10)]:$V \in \tilde{\mathfrak{X}}^{\rho}(G)$ if and only if  $V=ce_3$, that is, $a=b=d=0$. In this case $ \nabla^*\nabla V=-C^2V$. 
   \item[(11)]: $V \in \tilde{\mathfrak{X}}^{\rho}(G)$ if and only if  $V=c(\sqrt{2}e_1-\frac{2}{3}\sqrt{2}e_2+e_3)$, that is, $a=\sqrt{2}c,\quad b=-\frac{2}{3}\sqrt{2}c$. In this case $\epsilon=\delta=-1, \quad \nabla^*\nabla V=\frac{5}{18}A^2V$. 
\end{itemize} 
\end{theorem}
Applying \eqref{hor} and \eqref{con33a2} to case $(7)$ in Theorem \ref{hor4a2},  
by standard calculations we
prove that $ \nabla^*\nabla V=0$ if and only if $a=d=0$ and $(B^2-A^2+3AB)=0$, that is, $A=((3-\sqrt{13})/2)B$.
Using \eqref{XX} for  $V=b(e_2+e_3)$, we find
\begin{center}
$R(\nabla_{e_2} V,V)e_2=R(\nabla_{e_3} V,V)e_3=A^2(A+2B)c^2e_4$,\\
$R(\nabla_{e_1} V,V)e_1=R(\nabla_{e_4} V,V)e_4=0$
\end{center}
and so,
\begin{center}
$tr[R(\nabla_. V,V)_.]=\sum_{i} \varepsilon_i R(\nabla_{e_i} V,V)e_i=0$.   
\end{center}
Following this argument for other cases of type $\rm(a2)$ proves the following result.
 \begin{theorem}\label{hor8a2}
Let $V=ae_1+be_2+ce_3+de_4$ be a critical point for the energy functional described by the different conditions $(5)-(11)$ in Theorem \ref{hor4a2}, then it is easy to check that
 \begin{itemize}
\item[$\bullet$] For cases $(5)$ and $(8)$, $V$ is a harmonic map  if and only if  $A=-B$.
\item[$\bullet$] For case $(7)$, $V$ is a harmonic map if and only if $A=((-3\pm \sqrt{13})/2)B$.
\item[$\bullet$] For case $(10)$, $V$ is a harmonic map if and only if  $C=B=0$.
\end{itemize}
\end{theorem}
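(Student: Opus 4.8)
The plan is to test, for each distinguished critical field $V$ supplied by Theorem \ref{hor4a2}, the two conditions in \eqref{hor} that characterise a harmonic map: $\nabla^*\nabla V=0$ and $tr[R(\nabla_. V,V)_.]=0$. The first of these is almost immediate, since Theorem \ref{hor4a2} already records $\nabla^*\nabla V=\lambda V$ with an explicit scalar $\lambda=\lambda(A,B,C)$ for each critical $V$; as $V\neq 0$, the equation $\nabla^*\nabla V=0$ is equivalent to $\lambda=0$. For cases $(5)$ and $(8)$ one has $\lambda=-(A+B)^2$ and $\lambda=\frac{13}{36}(A+B)^2$, both vanishing exactly when $A=-B$; for case $(7)$, $\lambda=B^2-A^2+3AB$, whose vanishing is the stated quadratic relation between $A$ and $B$; and for case $(10)$, $\lambda=-C^2$, which forces $C=0$.

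It then remains to treat the curvature trace, and here the behaviour splits. First I would compute the Levi-Civita components and the curvatures $R(\nabla_{e_i}V,V)e_i$ from the convention $R(X,Y)=\nabla_{[X,Y]}-[\nabla_X,\nabla_Y]$, exactly as was done for case $(7)$ in the running text above. In that case the two surviving terms $R(\nabla_{e_2}V,V)e_2$ and $R(\nabla_{e_3}V,V)e_3$ coincide, but $e_3$ is time-like ($\varepsilon_3=-1$) whereas $e_2$ is space-like ($\varepsilon_2=+1$), so they cancel in the signed trace $\sum_i\varepsilon_i R(\nabla_{e_i}V,V)e_i$; hence $tr[R(\nabla_. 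V,V)_.]=0$ identically and the harmonic-map condition reduces to $\lambda=0$ alone. I expect the same cancellation, driven by the time-like direction, to occur for cases $(5)$ and $(8)$, so that in all three of these cases the curvature trace imposes no new constraint and $A=-B$ is the complete answer.

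The genuine content lies in case $(10)$, where the structure constants carry the radical $\sqrt{B^2-A^2-C^2-AC}$ and the critical field is $V=ce_3$. Here the first condition gives only $C=0$, so the curvature trace must supply the extra constraint $B=0$. The plan is to show that, after setting $C=0$, the signed trace $tr[R(\nabla_. V,V)_.]$ is a nonzero multiple of $B$ (times $c^2$ and a factor nonvanishing on the admissible parameter range), so that it dies precisely when $B=0$; combining the two conditions then gives $C=B=0$, as claimed. The main obstacle is exactly this computation: the radical makes the connection and curvature in case $(10)$ appreciably messier than elsewhere, and one must track carefully which contributions survive the signed trace, so the difficulty is bookkeeping rather than anything conceptual. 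Finally, I would remark that for the critical fields of cases $(6)$, $(9)$ and $(11)$ the scalar $\lambda$ is a nonzero multiple of $A^2$, so $\nabla^*\nabla V=0$ would force $A=0$ and degenerate the metric; this explains why those cases yield no harmonic maps and are absent from the statement.
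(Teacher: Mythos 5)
Your proposal follows essentially the same route as the paper: read off the scalar $\lambda$ with $\nabla^*\nabla V=\lambda V$ from Theorem \ref{hor4a2} so that the vertical condition becomes $\lambda=0$, and then check the signed trace $\sum_i\varepsilon_iR(\nabla_{e_i}V,V)e_i$ case by case; for case $(7)$ the paper carries out exactly the cancellation you describe (the equal $e_2$- and $e_3$-contributions killing each other because $\varepsilon_2=+1$, $\varepsilon_3=-1$) and then merely asserts that the other cases follow by the same kind of computation. One correction to your stated expectation: in case $(5)$ the critical field is $V=ae_1$, so $\nabla_{e_i}V=0$ for $i\neq 1$ and there is no signature cancellation at all --- the trace reduces to the single term $\varepsilon_1R(\nabla_{e_1}V,V)e_1=a^2(A+B)^3e_4$, which vanishes precisely when $A=-B$; the conclusion of the theorem survives, but because the trace condition coincides with $\lambda=0$ rather than because the trace vanishes identically. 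Your plan for case $(10)$ --- that $\lambda=-C^2=0$ gives only $C=0$ and the curvature trace must then force $B=0$ (indeed, with $V=ce_3$ and $C=0$ the trace comes out proportional to $AB^2c^2e_4$) --- is the substantive part and is consistent with what the statement requires.
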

Starting from \eqref{con3a2}, a straightforward calculation proves the following classification result.
\begin{cor}\label{hor6}
If $g$ is a left-invariant Lorentzian Einstein metric on $G$,  then for the
different cases $(5)-(11)$ in Theorem \ref{hor4a2}, the equivalent properties for $V$ are listed in Table 2.
\end{cor}
 \begin{table}
 \caption{Set of equivalent properties for vector fields: type $\rm(a2)$.\label{tab}}
\begin{tabular}{|p{1cm}|p{13cm}|}
\hline
$(G,g)$ &   Equivalent properties (denoted by $\equiv$) \\
\hline
$(5)$&$V$ is harmonic if and only if $A=-B$;$\hspace*{2mm}\equiv\hspace*{2mm}$  $V \in \tilde{\mathfrak{X}}^{\rho}(G)$ if and only if $V=ae_1$;$\hspace*{2mm}\equiv\hspace*{2mm}$  $V$ defines harmonic map if and only if $A=-B$;$\hspace*{2mm}\equiv\hspace*{2mm}$  $V$ is Killing if and only if $A=-B$ and  $V=ae_1$.\\
\hline
$(6)$& $V$  is geodesic; $\hspace*{2mm}\equiv\hspace*{2mm}$  $V \in \tilde{\mathfrak{X}}^{\rho}(G)$; $\hspace*{2mm}\equiv\hspace*{2mm}$  none of these vector fields is harmonic (in particular, defines a harmonic map); $\hspace*{2mm}\equiv\hspace*{2mm}$  $V=b(e_2-e_3)$.\\
\hline
$(7)$& $V$  is geodesic; $\hspace*{2mm}\equiv\hspace*{2mm}$  $V$ is harmonic if and only if $A=((-3\pm\sqrt{13})/2)B$; $\hspace*{2mm}\equiv\hspace*{2mm}$  $V \in \tilde{\mathfrak{X}}^{\rho}(G)$; $\hspace*{2mm}\equiv\hspace*{2mm}$  $V$ defines harmonic map if and only if $A=((-3\pm\sqrt{13})/2)B$; $\hspace*{2mm}\equiv\hspace*{2mm}$  $V$ is Killing if and only if $A=-B$; $\hspace*{2mm}\equiv\hspace*{2mm}$  $V=b(e_2+e_3)$.\\
\hline
$(8)$&$V$  is geodesic if and only if $A=-B$ and $c=-b$;$\hspace*{2mm}\equiv\hspace*{2mm}$  $V$ is harmonic if and only if $A=-B$; $\hspace*{2mm}\equiv\hspace*{2mm}$  $V \in \tilde{\mathfrak{X}}^{\rho}(G)$ if and only if  $c=-b, a=d=0$; $\hspace*{2mm}\equiv\hspace*{2mm}$  $V$ defines harmonic map if and only if $A=-B$; $\hspace*{2mm}\equiv\hspace*{2mm}$  $V$ is Killing if and only if $A=-B$ and $d=0$; $\hspace*{2mm}\equiv\hspace*{2mm}$   $V=ae_1+be_2+ce_3+de_4$.\\
\hline
$(9)$& $V$  is geodesic; $\hspace*{2mm}\equiv\hspace*{2mm}$  $V \in \tilde{\mathfrak{X}}^{\rho}(G)$; $\hspace*{2mm}\equiv\hspace*{2mm}$  none of these vector fields is harmonic (in particular, defines a harmonic map); $\hspace*{2mm}\equiv\hspace*{2mm}$  $V=a(e_1+e_3)$.\\
\hline
$(10)$& $V$ is geodesic if and only if $C=0$;$\hspace*{2mm}\equiv\hspace*{2mm}$  $V$ is harmonic if and only if $C=0$; $\hspace*{2mm}\equiv\hspace*{2mm}$  $V \in \tilde{\mathfrak{X}}^{\rho}(G)$; $\hspace*{2mm}\equiv\hspace*{2mm}$  $V$ defines harmonic map if and only if $C=0$; $\hspace*{2mm}\equiv\hspace*{2mm}$  $V$ is Killing if and only if $C=0$; $\hspace*{2mm}\equiv\hspace*{2mm}$  $V=ce_3$.\\
\hline
$(11)$& $V \in \tilde{\mathfrak{X}}^{\rho}(G)$; $\hspace*{2mm}\equiv\hspace*{2mm}$  none of these vector fields is harmonic (in particular, defines a harmonic map); $\hspace*{2mm}\equiv\hspace*{2mm}$  $V=c(\sqrt{2}e_1-\frac{2}{3}\sqrt{2}e_2+e_3)$.\\
%
\hline
\end{tabular}
\end{table}
Using Remark \ref{spatially}, the results listed in Table 2 show that for cases $(6)-(9)$, $V$ is spatially harmonic and for case $(10)$, $V$ is spatially harmonic if and only if
$C=0$.
\section{Harmonicity of vector fields: type $\rm(c1)$}
 Consider a four-dimensional simply connected Lie group $G$ of type $\rm(c1)$ and a basis $\lbrace X_i\rbrace _{i=1}^4$, with non-zero inner product $g$ on $\g$ determined by $g(X_1,X_1)=g(X_2,X_2)=g(X_3,X_4)=g(X_4,X_3)=1$.
We can construct a pseudo-orthonormal frame field $\lbrace e_1,e_2,e_3,e_4\rbrace$, putting
\begin{eqnarray}\label{frame}
\begin{array}{cr}
e_1=X_1,\quad e_2=X_2,\quad e_3=-(1/2)X_3+X_4,\quad e_4=(1/2)X_3+X_4.
\end{array}
\end{eqnarray}
Clearly, $e_3$ is time-like. A vector field $V\in \g$ is uniquely determined by its components with respect to the pseudo-orthonormal basis $\lbrace e_i \rbrace$. Hence $V=ae_1+be_2+ce_3+de_4\in \g$ is a left-invariant vector field on $G$ for some real constants $a,b,c,d$.
Notice that the (constant) norm of $V$ is given by $||V||^2=a^2+b^2-c^2+d^2$.
As an example, for case $(14)$, using \eqref{frame} we find
\begin{eqnarray}\label{conc1}
\begin{array}{cr}
\nabla_{e_1}e_1=-B(e_3-e_4),\quad 
\nabla_{e_1}e_2=\dfrac{1}{2}(A+D-\epsilon\alpha)(e_3-e_4),\\
\nabla_{e_1}e_3=\nabla_{e_1}e_4=-Be_1+\dfrac{1}{2}(A+D-\epsilon\alpha)e_2,\\
\nabla_{e_2}e_1=\dfrac{1}{2}(A+D+\epsilon\alpha)(e_3-e_4),\quad  
\nabla_{e_2}e_2=B(e_3-e_4),\\
\nabla_{e_2}e_3=\nabla_{e_2}e_4=\dfrac{1}{2}(A+D+\epsilon\alpha)e_1+Be_2,\\
\nabla_{e_4}e_1=\nabla_{e_3}e_1=\dfrac{1}{2}(A-D-\epsilon\alpha)e_2+E(e_3-e_4),
\end{array}
\end{eqnarray}
\begin{center}
$
\nabla_{e_4}e_2=\nabla_{e_3}e_2=\dfrac{1}{2}(-A+D-\epsilon\alpha)e_1+C(e_3-e_4),$\\$
\nabla_{e_4}e_3=\nabla_{e_4}e_4=\nabla_{e_3}e_3=\nabla_{e_3}e_4=Ee_1+Ce_2,$
\end{center}
where $\alpha=\sqrt{(A+D)^2+4B^2}$.
 Suppose that $u=e_3-e_4$. Then, using \eqref{conc1}, we get that $\nabla_{e_i}u=0$ for all indices i. Therefore, $u$ is a parallel light-like vector field. The existence of a parallel light-like vector field is an interesting phenomenon which has no Riemannian counterpart, and characterizes a class of pseudo-Riemannian manifolds which illustrate many of differences between Riemannian and pseudo-Riemannian settings (see for example \cite{ch1},\cite{ch2}).
Let $V =ae_1+be_2+ce_3+de_4 \in \g$ be an arbitrary left-invariant vector field. 
We can use \eqref{conc1} to calculate $\nabla_{e_i}V$ for all indices i. We get
\begin{eqnarray}\label{Xc1}
\begin{array}{cr}
\nabla_{e_1}V=(c+d)(-Be_1+\dfrac{1}{2}(D+A-\beta)e_2)\\
+(\dfrac{1}{2}(A+D-\beta)b-Ba)u,\\
\nabla_{e_2}V=(c+d)(\dfrac{1}{2}(D+A+\beta)e_1+Be_2)\\
+(\dfrac{1}{2}(A+D+\beta)a-Bb)u,\\
\nabla_{e_3}V=\nabla_{e_4}V=(E(c+d)+(\dfrac{1}{2}(D-A+\beta)b)e_1 \\
 +(C(c+d)-(\dfrac{1}{2}(D-A+\beta)a)e_2+(Cb+Ea)u,
\end{array}
\end{eqnarray}
where $\beta=\epsilon\sqrt{(A+D)^2+4B^2}$.
We can now use \eqref{Xc1} to calculate $ \nabla_{e_i}\nabla_{e_i} V$ and we find
\begin{eqnarray}
\begin{array}{cr}\label{con3c1}
\hspace{1cm}  \nabla_{e_1}\nabla_{e_1}V=-B(c+d)e_1+\frac{1}{2}(A+T-\beta)(c+d)e_2\\
+(b\frac{1}{2}(A+T-\beta)-aB)u,\\
\hspace{1.3cm} \nabla_{e_2}\nabla_{e_2}V=\frac{1}{2}(A+T+\beta)(c+d)e_1+B(c+d)e_2\\
+(a\frac{1}{2}(A+T+\beta)+bB)u,\\
\hspace{1cm} \nabla_{e_3}\nabla_{e_3}V=\nabla_{e_4}\nabla_{e_4}V=
-\frac{1}{4}(A-T-\beta)^2(e_1+e_2)\\
+\frac{1}{2}(Ca-Eb)(A-T-\beta)^2u,
\end{array}
\end{eqnarray}
Then, we get
\begin{eqnarray}
\begin{array}{cr}\label{con31c1}
\nabla_{\nabla_{e_1}e_1}V=\nabla_{\nabla_{e_2}e_2}V=0,\\
\nabla_{\nabla_{e_3}e_3}V=\nabla_{\nabla_{e_4}e_4}V=(\frac{1}{2}C(A+T+\beta)-BE)(c+d)e_1+\\ (BC+\frac{1}{2}E(A+T-\beta)(c+d)e_4+(b(BC+\frac{1}{2}E(A+T-\beta))+\\ a(\frac{1}{2}C(A+T+\beta)-BE))u,
\end{array}
\end{eqnarray}
Hence, from \eqref{con3c1} and \eqref{con31c1} we deduce
\begin{center} 
$ \nabla^*\nabla V=\sum_i \varepsilon_i( \nabla_{e_i}\nabla_{e_i} V-\nabla_{\nabla_{e_i}e_i}V)=((A+D)^2+4B^2)(c+d)u$, \\
\end{center}
that is, $\nabla^*\nabla V$ identically vanishes if and only if $c=-d.$ 
Using \eqref{Xc1}, the curvature tensor is completely determined by 
\begin{center} 
$R(\nabla_{e_1} V,V)e_1=-R(\nabla_{e_2} V,V)e_2=\dfrac{1}{2}B(c+d)^2((A+D)^2+4B^2+(D-A)\epsilon\alpha)u,$\\
$R(\nabla_{e_3} V,V)e_3=R(\nabla_{e_4} V,V)e_4=\dfrac{1}{4}B(c+d)((A+D)^2+4B^2+(D-A)\epsilon\alpha) $\\
$(-2E(c+d)-b(D-A+\epsilon \alpha))e_1-(2C(c+d)-a(D-A+\epsilon \alpha))e_2),$\\
\end{center}
where $\alpha=\sqrt{(A+D)^2+4B^2}$. Therefore
\begin{center} 
$tr[R(\nabla_. V,V)_.]=\sum_{i} \varepsilon_i R(\nabla_{e_i} V,V)e_i=0$.
\end{center}
Applying this argument for other cases we prove the following result.
 \begin{theorem}\label{hor8}
Let $G$ be a four-dimensional simply connected Lie group of type $\rm(c1)$ and $V=ae_1+be_2+ce_3+de_4$ a left-invariant vector field on $G$, for some real constants $a,b,c,d$. For the
different cases  $(12),(13)$ and $(14)$ in Theorem \ref{cal33}, $V$ is a critical point for the energy functional $E
|_{\mathfrak{X}^{\rho}(M)}$, that is, 
$V \in \tilde{\mathfrak{X}}^{\rho}(G)$ (which ${\rho}$ is not necessarily the same for the different cases). Moreover, $V$ defines a harmonic map if and only if  $V=ae_1+be_2+cu$, that is, $d=-c$.
\end{theorem}
Therefore, for all these cases, left-invariant harmonic vector fields define harmonic maps, form three-parameter families.
Also with regard to harmonicity properties of invariant vector fields,  four-dimensional simply connected Lie groups of type $\rm(c1)$ display
some particular features. The main geometrical reasons for the special behaviour
of these Lie groups are the existence of a parallel light-like vector field.
Starting from \eqref{conc1}, we can easily prove the following classification result.
\begin{prop}\label{pro2222}
Let $G$ be a four-dimensional simply connected Lie group of type $\rm(c1)$ and $V \in \g$ a left-invariant vector field on $G$. If $g$ is a left-invariant Lorentzian Einstein metric on $G$, then we have the classification listed in Table 3.
\end{prop}
\begin{table}
\caption{ Geodesic, killing and parallel vector fields on $G$: type $\rm(c1)$.\label{tab3}}
\begin{tabular}{|c|c|c|c|}
\hline
$(G,g)$ & Geodesic vector fields & Killing vector fields & parallel vector fields \\
\hline
$(12)$&  $V=be_2+cu$ &  $V=cu \Leftrightarrow C=0$ & $V=cu \Leftrightarrow C=0$\\
\hline
$(13)$&$V=ae_1-\frac{B-C-T}{2A}ae_2+cu$& $V=cu \Leftrightarrow$ & $V=cu \Leftrightarrow$\\
&& $ 4A^2=B^2-(C+T)^2$ & $ 4A^2=B^2-(C+T)^2$\\
\hline
$(14)$&$V=cu$&$V=cu$ & $V=cu$\\
\hline
\end{tabular}
\end{table}
 Comparing Proposition \ref{pro2222} with Theorem \ref{hor8}, one sees  the following main result which emphasizes once again the special role played
by the parallel vector field $u$. 
\begin{cor}\label{hor12c1}
Let $g$ be a left-invariant Lorentzian Einstein metric on $G$, then for the
different cases $(12)-(14)$, the equivalent properties for $V$ are listed in the following Table 4.
\end{cor}

\begin{table}
\caption{Set of equivalent properties for vector fields: type $\rm(c1)$. .\label{tab}}
\begin{tabular}{|p{1cm}|p{13cm}|}
\hline
$(G,g)$ &   Equivalent properties (denoted by $\equiv$) \\
\hline
$(12)$ &  $V$  is geodesic if and only if $a=0$;    $\hspace*{2mm}\equiv\hspace*{2mm}$    $V$ is harmonic; $\hspace*{2mm}\equiv\hspace*{2mm}$    $V \in \tilde{\mathfrak{X}}^{\rho}(G)$ $\hspace*{2mm}\equiv\hspace*{2mm}$    $V$ defines harmonic map; $\hspace*{2mm}\equiv\hspace*{2mm}$    $V$ is Killing if and only if $a=b=C=0$ , that is, V is collinear to $u$; $\hspace*{2mm}\equiv\hspace*{2mm}$ $V$ is parallel if and only if $a=b=C=0$ , that is, V is collinear to $u$; $\hspace*{2mm}\equiv\hspace*{2mm}$     $V=ae_1+be_2+cu$,\\
\hline
$(13)$&  $V$  is geodesic  if and only if $b=-\frac{B-C-T}{2A}a$; $\hspace*{2mm}\equiv\hspace*{2mm}$     $V$ is harmonic; $\hspace*{2mm}\equiv\hspace*{2mm}$     $V \in \tilde{\mathfrak{X}}^{\rho}(G)$ $\hspace*{2mm}\equiv\hspace*{2mm}$     $V$ defines harmonic map;  $\hspace*{2mm}\equiv\hspace*{2mm}$ $V$ is Killing if and only if $a=b=0,\quad 4A^2=B^2-(C+T)^2 $, that is, V is collinear to $u$; $\hspace*{2mm}\equiv\hspace*{2mm}$  $V$ is parallel if and only if  $a=b=0,\quad 4A^2=B^2-(C+T)^2 $; $\hspace*{2mm}\equiv\hspace*{2mm}$    $V=ae_1+be_2+cu$,\\
\hline
$(14)$&$V$ is geodesic if and only if $a=b=0$; $\hspace*{2mm}\equiv\hspace*{2mm}$    $V$ is harmonic; $\hspace*{2mm}\equiv\hspace*{2mm}$     $V \in \tilde{\mathfrak{X}}^{\rho}(G)$ $\hspace*{2mm}\equiv\hspace*{2mm}$    $V$ defines harmonic map; $\hspace*{2mm}\equiv\hspace*{2mm}$ $V$ is Killing if and only if $a=b=0,$ $\hspace*{2mm}\equiv\hspace*{2mm}$  $V$ is parallel if and only if $a=b=0$; $\hspace*{2mm}\equiv\hspace*{2mm}$    $V=ae_1+be_2+cu$,\\
\hline
\end{tabular}
\end{table}
For cases $(12)$ and $(13)$, by Theorem \ref{hor12c1}, it is easily proved that $V$ is spatially harmonic if and only if
$a=0$, $b=-\frac{B-C-T}{2A}a$, and for case $(14)$, $V$ is spatially harmonic if and only if $a=b=0$.  
\section{Harmonicity of vector fields: type $\rm(c2)$}
We start classifying left-invariant vector fields on four-dimensional simply connected Lie group $G$ of type $\rm(c2)$, proving the following result.
 \begin{theorem}\label{hor8c2}
 Let $(G,g)$ be a four-dimensional Lorentzian Lie group of type $\rm(c2)$. For cases $(15)$ and $(16)$, all vector fields in $\g$ are critical points for the energy functional
restricted to vector fields of the same length, that is, 
$V \in \tilde{\mathfrak{X}}^{\rho}(G)$ (which ${\rho}$ is not necessarily the same for the different cases).\\  Moreover, for case $(15), $ $V$ defines a harmonic map. For case $(16)$,  $V=ae_1+be_2+ce_3+de_4$ defines a harmonic map if and only if $c=-d$.
\end{theorem}
 \begin{proof}
For case $(15)$, using \eqref{frame} we deduce 
  \begin{eqnarray}\label{conc2}
\begin{array}{cr}
\nabla_{e_3}e_1=\nabla_{e_4}e_1=-Ae_2+Be_3-Be_4,\\
\nabla_{e_3}e_2=\nabla_{e_4}e_2=Ae_1+Ce_3-Ce_4,\\
\nabla_{e_3}e_3=\nabla_{e_4}e_3=\nabla_{e_3}e_4=\nabla_{e_4}e_4=Be_1+Ce_2,
\end{array}
\end{eqnarray}
while $\nabla_{e_i}e_j=0$ in the remaining cases. Let $V =ae_1+be_2+ce_3+de_4 \in \g$ be a vector field. From \eqref{conc2} we get
\begin{eqnarray}\label{Xc2}
\begin{array}{cr}
\nabla_{e_1}V=\nabla_{e_2}V=0,\\
\nabla_{e_3}V=\nabla_{e_4}V=(Ab+Bc+Bd)e_1\\+(-Aa+Cc+Cd)e_2+(Ba+Cb)e_3-(Ba+Cb)e_4\\
=(Ab+Bc+Bd)e_1+(-Aa+Cc+Cd)e_2+(Ba+Cb)u.
\end{array}
\end{eqnarray}
Thus, $u$ is a parallel vector field, where we note the special role of $u=e_3-e_4$.
We calculate $ \nabla_{e_i}\nabla_{e_i} V$ and $\nabla_{\nabla_{e_i}e_i}V$ and we find
\begin{eqnarray}
\begin{array}{cr}\label{con3c2}
\nabla_{e_1}\nabla_{e_1}V=\nabla_{e_2}\nabla_{e_2}V=0,\\ \nabla_{e_3}\nabla_{e_3}V=\nabla_{e_4}\nabla_{e_4}V=A(-Aa+Cc+Cd)e_1\\
-A(Ab+Bc+Bd)e_2+(C(-Aa+Cc+Cd)+B(Ab+Bc+Bd))u,\\
\nabla_{\nabla_{e_1}e_1}V=\nabla_{\nabla_{e_2}e_2}V=\nabla_{\nabla_{e_3}e_3}V=
\nabla_{\nabla_{e_4}e_4}V=0,
\end{array}
\end{eqnarray}
Hence, from \eqref{con3c2} we deduce
\begin{center} 
$ \nabla^*\nabla V=\sum_i \varepsilon_i( \nabla_{e_i}\nabla_{e_i} V-\nabla_{\nabla_{e_i}e_i}V)=0$. \\
\end{center}
Using \eqref{Xc2}, the curvature tensor is completely determined by 
\begin{center} 
$R(\nabla_{e_1} V,V)e_1=R(\nabla_{e_2} V,V)e_2=R(\nabla_{e_3} V,V)e_3=R(\nabla_{e_4} V,V)e_4=0.$\\
\end{center}
Therefore
\begin{center} 
$tr[R(\nabla_. V,V)_.]=\sum_{i} \varepsilon_i R(\nabla_{e_i} V,V)e_i=0$.
\end{center}
A similar argument for case (16) will prove the statement.
\end{proof}
Using \eqref{con3c2}, with regard to geodesic and Killing vector fields we obtain the following.
\begin{prop}\label{proc2}
Let $G$ be a four-dimensional simply connected Lie group of type $\rm(c2)$ and $V\in \g$ a left-invariant vector field on $G$. If $g$ is a left-invariant Lorentzian Einstein metric on $G$, then we have the results in Table 5.
\end{prop}
\begin{table}
\caption{Geodesic, killing and parallel vector fields on $G$: type $\rm(c2)$.\label{tab}}
\begin{tabular}{|c|c|c|c|}
\hline
$(G,g)$ & Geodesic vector fields & Killing vector fields & parallel vector fields \\
\hline
$(15)$& $V=ae_1+be_2+cu$& $V=cu$ & $V=cu$\\
\hline
$(16)$&  $V=a(e_1-e_2)+cu$ 
&$V=a(e_1-e_2)$ & \xmark \\
%
\hline
\end{tabular}
\end{table}
\begin{table}
\caption{Set of equivalent properties for vector fields: type $\rm(c2)$.\label{tab}}
\begin{tabular}{|p{1cm}|p{13cm}|}
\hline
$(G,g)$ &   Equivalent properties  \\
\hline
$(15)$&$V$ is geodesic if and only if $d=-c$; $\hspace*{2mm}\equiv\hspace*{2mm}$  $V$ is harmonic; $\hspace*{2mm}\equiv\hspace*{2mm}$    $V \in \tilde{\mathfrak{X}}^{\rho}(G)$; $\hspace*{2mm}\equiv\hspace*{2mm}$    $V$ defines harmonic map;$\hspace*{2mm}\equiv\hspace*{2mm}$  $V$ is Killing if and only if $a=b=0, d=-c$; $\hspace*{2mm}\equiv\hspace*{2mm}$  $V$ is parallel if and only if $a=b=0, d=-c$; $\hspace*{2mm}\equiv\hspace*{2mm}$    $V=ae_1+be_2+ce_3+de_4$,\\
\hline
$(16)$& $V$ is geodesic if and only if $b=-a$; $\hspace*{2mm}\equiv\hspace*{2mm}$  $V$ is harmonic; $\hspace*{2mm}\equiv\hspace*{2mm}$   $V \in \tilde{\mathfrak{X}}^{\rho}(G)$ $\hspace*{2mm}\equiv\hspace*{2mm}$    $V$ defines harmonic map; $\hspace*{2mm}\equiv\hspace*{2mm}$    $V$ is Killing if and only if $b=-a$ and $c=d=0$;  $\hspace*{2mm}\equiv\hspace*{2mm}$ $V=ae_1+be_2+cu$,\\
\hline
\end{tabular}
\end{table}
Using Proposition \ref{proc2} and Theorem \ref{hor8c2}, a straight forward calculation proves the following classification result.
\begin{cor}\label{hor1c2}
Let $V\in \g$ be a left-invariant vector field on four-dimensional simply connected Lie group $G$ of type $\rm(c2)$. If $g$ is a left-invariant Lorentzian Einstein metric on $G$, then for the
different cases $(15)$ and $(16)$ the equivalent properties for $V$ are listed in the Table 6:
\end{cor}

Clearly, by Remark \ref{spatially}, the results listed in Table 6 show that for cases $(15)$ and $(16)$, $V$ is spatially harmonic if and only if
$d=-c$.

\section{The energy of vector fields}
We calculate explicitly the energy of a vector field $V\in\g$ of a four-dimensional Einstein Lorentzian Lie group. This gives us the opportunity to determine some critical values of the energy functional on four-dimensional Einstein Lorentzian Lie groups.
\subsection{Four-dimensional Einstein Lorentzian Lie group of types $\rm(a1)$ and $\rm(a2)$}
 Let $(G,g)$ be a four-dimensional Einstein Lorentzian Lie group of types $\rm(a1)$ or $\rm(a2)$ and $\lbrace e_i\rbrace _{i=1}^4$ a pseudo-orthonormal basis with $e_3$ time-like. We prove the following now.
\begin{prop}\label{E1}
Let $G$ be a four-dimensional simply connected Lie group of type $\rm(a1)$ or $\rm(a2)$, $V=ae_1+be_2+ce_3+de_4\in \g$ a vector field on $G$ and $\D$ its relatively compact domain. Denote by $E_\D(V)$ the energy of $V|_\D$. For the
different cases $(1)-(11)$ the energy of vector field $V$ is listed in Table 7.
\end{prop}
\begin{table}
\caption{Energy of vector fields: types $\rm(a1)$ and $\rm(a2)$.\label{tab}}
\begin{tabular}{|p{1cm}|p{12.5cm}|}
\hline
$(G,g)$ & $E_\D(V)$ \\
\hline
$(1)$& $(2+A^2((||V||^2+2(d^2-b^2)+2\delta d(b+c)-2bc)/2)vol \D$\\
\hline
$(2)$&$(2+(A+B)^2((a^2+3d^2)(A+B)+(B-A)(b-c)^2-2d(b-c)\sqrt{A^2-B^2})/8)vol \D$\\
\hline
$(3)$&$(2+\frac{(A-B)^2(A+B)^2}{2B^2}||V||^2)vol \D$\\
\hline
$(4)$&$(2+A^2||V||^2/2))vol \D$\\
\hline
$(5)$&$(2+(A+B)(A(a^2-b^2)+2\epsilon bc\sqrt{A^2+AB+B^2}+B(a^2+c^2))/2)vol \D$\\
\hline
$(6)$&$(2+(A^2(4a^2+12d^2+17c^2+24bc+7b^2)/2))vol \D$,  in this case $\epsilon=1$\\
\hline
$(7)$&$(2+(A^2(||V||^2+2d)-B^2(b^2-c^2))/2)vol \D$\\
\hline
$(8)$&$(2+(1/72)(A+B)(A(4a^2-17b^2-7c^2+12d^2-24bc)+B(4a^2+7b^2+17c^2+12d^2+24bc)))vol \D$,  in this case $\epsilon=-1$\\
\hline
$(9)$&$(2+(1/2)A^2(\frac{7}{4}a^2+b^2+\frac{17}{4}c^2+3d^2-6ac))vol \D$, in this case $\epsilon=1$\\
\hline
$(10)$&$(2-(1/2)(AC(a^2-b^2)-C^2(a^2+c^2)-2\epsilon ab\sqrt{-A^2-C^2-AC}C ))vol \D$, in this case $B=0$\\
\hline
$(11)$&$(2+(1/36)A^2(3a^2-b^2+17c^2-5d^2-10ab+9\sqrt{2}ac-9\sqrt{2}bc))vol \D$, in this case $\epsilon=\delta=-1$\\
\hline
\end{tabular}
\end{table}
\begin{proof}
Let $(G,g)$ be a pseudo-Riemannian manifold of dimension $4$. Consider a local pseudo-orthonormal
basis $\lbrace e_1,...,e_n\rbrace$ of vector fields, with $\varepsilon_i= g(e_i,e_i)=\pm 1$ for all indices i. Then, locally,
\begin{center}
$||\nabla V||^2=\sum_{i=1}^{n}\varepsilon_{i} g(\nabla_{e_i}V,\nabla_{e_i}V).$
\end{center}
These conclusions are obtained from a case-by-case argument. As an example,
for case $(3)$, equation \eqref{con2} easily yields
\begin{center}
$||\nabla V||^2=\frac{(A-B)^2(A+B)^2}{B^2}||V||^2.$
\end{center}
Therefore, $||\nabla V||=0$ if and only if $A=\pm B$. Thus, if $A=\pm B$, then vector fields of the same length, will
minimize the energy.
\end{proof}
\subsection{ Four-dimensional Einstein Lorentzian Lie group of type $\rm(c1)$ and $\rm(c2)$}
Let $(G,g)$ be a four-dimensional Einstein Lorentzian Lie group of type $\rm(c1)$ or $\rm(c2)$ and
$\lbrace e_1,...,e_4\rbrace$ a local pseudo-orthonormal
basis of vector fields described in \ref{frame}. We verify the following.
\begin{prop}\label{E2}
Let $G$ be a four-dimensional simply connected Lie group of type $\rm(c1)$ or $\rm(c2)$,  $V=ae_1+be_2+ce_3+de_4\in \g$ a vector field on $G$ and $\D$ its relatively compact domain. Denote by $E_\D(V)$ the energy of $V|_\D$. For the
different cases $(12)-(16)$ the energy  of vector field $V$ is listed in Table 8.
\end{prop}
\begin{table}
\caption{Energy  of vector fields: types $\rm(c1)$ and $\rm(c2)$.\label{tab}}
\begin{tabular}{|p{1cm}|p{11.5cm}|}
\hline
$(G,g)$ & $E_\D(V)$ \\
\hline
$(12)$&$(2+((A+B)^2+C^2)(c+d)^2/2))vol \D$\\
\hline
$(13)$&$(2+(B^2+4A^2-2CB-2BD+(C+D)^2)(B^2+4A^2+2CB+2BD+(C+D)^2)(c+d)^2/32A^2))vol \D$\\
\hline
$(14)$&$(2+((A+D)^2+4B^2)(c+d)^2/2))vol \D$\\
\hline
$(15)$&$2vol \D$\\
\hline
$(16)$&$(2+((A+B)^2+A^2+B^2)(c+d)^2/2))vol \D$\\
\hline
\end{tabular}
\end{table}
\begin{proof}
We followed the same argument used in Proposition \ref{E1}, for case (14).
From equation \eqref{Xc2}, we deduce
\begin{center}
$||\nabla V||^2=((A+D)^2+4B^2)(c+d)^2.$
\end{center}
Thus, $||\nabla V||=0$ if and only if $c=-d$. Therefore, if $c=-d$, then vector fields of the same length, will
minimize the energy.
\end{proof}

\end{document}